\theoremstyle{plain}
\newtheorem{lemma}[subsection]{Lemma}
\newtheorem{proposition}[subsection]{Proposition}
\newtheorem{theorem}[subsection]{Theorem}
\newtheorem{corollary}[subsection]{Corollary}
\theoremstyle{definition}
\newtheorem{example}[subsection]{Example}
\newtheorem{remark}[subsection]{Remark}
\newtheorem*{theorem*}{Theorem}
\DeclareMathOperator*{\hocolim}{hocolim}
\newcommand{\co}{\colon\thinspace}
\newcommand{\ra}{\rightarrow}
\newcommand{\inv}{^{-1}}
\newcommand{\id}{\mathrm{id}}
\newcommand{\Cat}{\mathit{Cat}}% The category of small categories 
\newcommand{\cB}{\mathcal B}
\newcommand{\cC}{\mathcal C}
\newcommand{\cD}{\mathcal D}
\newcommand{\cL}{\mathcal L}
\newcommand{\cS}{\mathcal S}
\newcommand{\br}{\mathsf{Br}^{\!+}}
\title{Weak braided monoidal categories and their homotopy colimits}
\date{\today}
\author{Mirjam Solberg} \address{
    Department of Mathematics, University of Bergen, P.O. Box 7800, N-5020 Bergen, Norway} \email{mirjam.solberg@math.uib.no}
\begin{document}
\begin{abstract}
 We show that the homotopy colimit construction for diagrams of categories with an operad action, recently introduced by Fiedorowicz, Stelzer and Vogt, has the desired homotopy type for diagrams of weak braided monoidal categories. This provides a more flexible way to realize $E_2$ spaces categorically. 
\end{abstract}

\subjclass[2000]{Primary 18D10, 18D50; Secondary 55P48.}
\keywords{Weak braided monoidal categories, homotopy colimits, double loop spaces.}
\maketitle

\section{Introduction}
Braided monoidal categories have been much studied and are used extensively in many areas of mathematics, for instance in knot theory, representation theory and topological quantum field theories. It has been known for a long time that the nerve of a braided monoidal category is an $E_2$ space, and it was shown recently \cite{FSV13} that all homotopy types of $E_2$ spaces arise  in this way. In this article we study a weaker categorical structure, namely weak braided monoidal categories. These are monoidal categories with a family of natural morphisms $X\otimes Y\ra Y\otimes X$ satisfying the axioms for a braiding, except that they are not required to be isomorphisms. We will see that weak braided monoidal categories give a more flexible way to realize $E_2$ spaces categorically.

Homotopy colimit constructions have become increasingly important in homotopy theory. In order for the equivalence between weak braided monoidal categories and $E_2$ spaces to be really useful one should be able to construct homotopy colimits on the categorical level. Such a homotopy colimit construction was defined in \cite{FSV13} in general for diagrams of categories with an operad action. The question of the homotopy properties of the homotopy colimit was left open for weak braided monoidal categories. In this paper we provide an answer to that question. Let $\br\text{-}\Cat$ denote the category of weak braided monoidal categories and let $X$ be a diagram of weak braided monoidal categories. Applying the nerve $N$ to a weak braided monoidal category yields a space with an action of the $E_2$ operad $N\br$, see Subsection~\ref{subsection hom}. Let ${\hocolim}^{\br}X$ denote the homotopy colimit of $X$ defined in \cite{FSV13}, and let  ${\hocolim}^{N\br}NX$ denote the homotopy colimit of $NX$, for details see Subsection~\ref{subsection hom}. Then our main result, Theorem~\ref{theorem main}, can be stated as follows.
\begin{theorem*}
There is a natural weak equivalence 
$${\hocolim}^{N\br}NX \ra N({\hocolim}^{\br}X)$$
of $N\br$-algebras.
\end{theorem*}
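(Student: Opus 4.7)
The plan is to present both homotopy colimits as geometric realizations of parallel bar-type simplicial objects, identify them term-by-term under $N$, and then control the passage to realization via a Thomason-type theorem.

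First, I would unpack the FSV13 construction of $\hocolim^{\br} X$ as the realization of a bar-like simplicial object $B_\bullet^{\br}$ in $\br$-algebras in $\Cat$, whose $p$-th level is a coproduct indexed by $p$-simplices $(i_0 \to \cdots \to i_p)$ of $\nerve I$ of free $\br$-algebra pieces built from the $X(i_j)$, with faces induced from operadic composition, the $\br$-action on the $X(i_j)$, and functoriality in $I$. The topological homotopy colimit $\hocolim^{N\br}NX$ has a parallel presentation as $|B_\bullet^{N\br}|$ in $\cS$, with $\br$ replaced by $N\br$ and each $X(i_j)$ by $NX(i_j)$.

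Second, because the nerve functor preserves finite products and all coproducts, it commutes with the formation of free $\br$-algebras in $\Cat$: there is a natural isomorphism $N F_\br Y \cong F_{N\br} NY$ for any category $Y$. Applied levelwise this yields a natural isomorphism of simplicial sets $N B_p^{\br} \cong B_p^{N\br}$. The map of the theorem is then the canonical composite
$$|B_\bullet^{N\br}| \;\cong\; |N B_\bullet^{\br}| \;\longrightarrow\; N|B_\bullet^{\br}|,$$
where the last arrow is the standard comparison between the realization of a levelwise nerve and the nerve of a categorical realization. Its $N\br$-equivariance is automatic from naturality of all the structure maps involved.

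Third, to see that this last arrow is a weak equivalence, I would invoke a Thomason-type theorem saying that for a suitably cofibrant simplicial object in $\Cat$ the realization of the levelwise nerve is weakly equivalent to the nerve of the categorical realization. The \emph{main obstacle} will be verifying the requisite cofibrancy for $B_\bullet^{\br}$ in the weak braided setting. Without invertibility of the braiding the argument used for ordinary braided monoidal categories in FSV13 does not transport verbatim; I would instead exploit that each $B_p^{\br}$ is a free $\br$-algebra to check directly that the simplicial degeneracies are Dwyer maps (sieves) on objects, hence Thomason cofibrations. As a fallback, one could construct a zigzag of natural weak equivalences reducing to the known braided case via a suitable cofibrant replacement from $\br\text{-}\Cat$ to $\Br\text{-}\Cat$.
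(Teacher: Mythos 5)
There is a genuine gap, and it sits exactly where you flag your ``main obstacle.'' The difficulty in this theorem is not the cofibrancy of a bar construction or whether degeneracies are Dwyer maps; it is the interchange between the free $\br$-algebra functor and the homotopy colimit. The general machinery of Fiedorowicz--Stelzer--Vogt already produces the comparison map you describe and reduces its being a weak equivalence to a single combinatorial condition on the operad (their ``factorization condition''): certain factorization categories, whose objects are positive braids $\alpha\in\cB_m^+$ with $p(\alpha)A\tilde B$ landing in a Young subgroup and whose morphisms are left multiplications by block braids $\gamma_1\oplus\cdots\oplus\gamma_n$, must have an initial object in each connected component. This condition is precisely what was left open for $\br$, and verifying it is the entire mathematical content of the paper: one shows that below any object there is a unique minimal one, using the length function on $\cB_m^+$, the existence of right least common multiples in positive braid monoids (Garside theory), the fact that lcm's of block braids are block braids, and left cancellation via the embedding of $\cB_m^+$ into the braid group. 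None of this appears in your proposal, and no amount of freeness of the levels $B_p^{\br}$ or product/coproduct preservation by $N$ substitutes for it, because the condition concerns how operadic composites factor through block decompositions, not how the simplicial object is glued.

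Your fallback is also not viable: there is no cofibrant replacement from $\br$-$\Cat$ to genuinely braided monoidal categories that preserves the homotopy type, since inverting the non-invertible braiding morphisms changes the nerve; indeed the whole point of the paper is that the weak braided case requires a separate argument. Your first two steps (identifying $N$ of the free $\br$-algebra with the free $N\br$-algebra on the nerve, using that the $\Sigma_k$-action on the objects of $\br(k)$ is free, and assembling the canonical comparison map) are fine and agree with the setup in the source, but they only construct the map; the weak equivalence claim still rests entirely on the unproved factorization condition.
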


\subsection{Organization}
We begin by giving the definition of weak braided monoidal categories in Section~\ref{section wbm} and provide some examples. In Section~\ref{section hocolim} we set up and prove our main result, Theorem~\ref{theorem main}. The proof involves an analysis of braid monoids which is interesting in its own right.

\section{Weak braided monoidal categories}\label{section wbm}
Let $\cD$ be a monoidal category with monoidal product $\otimes$, monoidal unit $i$, associativity isomorphisms $\mathsf{a}$ and left and right unit isomorphisms $\mathsf{l}$ and $\mathsf{r}$ respectively. 
A \emph{weak braiding} for $\cD$ consists of a family of morphisms 
$$\mathsf{b}_{d,e}\co d\otimes e \ra e\otimes d$$
in $\cD,$ natural in $d$ and $e$, such that $\mathsf{l}_d\mathsf{b}_{d,i}=\mathsf{r}_d$, and 
$\mathsf{r}_d\mathsf{b}_{i,d}=\mathsf{l}_d$ and the following two diagrams 
\begin{equation*}
\xymatrix{
(e\otimes d)\otimes f \ar[r]^{{\mathsf{a}}} 
& e\otimes (d\otimes f) \ar[d]+<0.7cm,0.3cm>^(0.7){\mathrm{id}\otimes{\mathsf{b}}} 
&& d\otimes (f\otimes e) \ar[r]^{{\mathsf{a}}^{-1}} 
& (d\otimes f)\otimes e \ar[d]+<0.7cm,0.3cm>^(0.7){{\mathsf{b}}\otimes1} \\
\save[]+<-0.7cm,0cm>*{(d\otimes e)\otimes f}
\ar[]+<-0.65cm,0.3cm>;[u]^(0.3){{\mathsf{b}}\otimes\mathrm{id}} \ar@[]+<0cm,-0.3cm>;[d]_(0.3){{\mathsf{a}}}  \restore 
&\save[]+<0.7cm,0cm>*{e\otimes (f\otimes d)} \restore  
&&\save[]+<-0.7cm,0cm>*{d\otimes (e\otimes f)}
\ar[]+<-0.65cm,0.3cm>;[u]^(0.3){\mathrm{id}\otimes{\mathsf{b}}} \ar[]+<-0.65cm,-0.3cm>;[d]_(0.3){{\mathsf{a}}^{-1}}  \restore 
&\save[]+<0.7cm,0cm>*{(f\otimes d)\otimes e} \restore  \\
d\otimes (e\otimes f) \ar[r]_{{\mathsf{b}}} 
& (e\otimes f)\otimes d \ar[u]+<0.7cm,-0.3cm>_(0.7){{\mathsf{a}}}
&& (d\otimes e)\otimes f \ar[r]_{{\mathsf{b}}} 
& f\otimes (d\otimes e) \ar[u]+<0.7cm,-0.3cm>_(0.7){{\mathsf{a}}^{-1}}
}\end{equation*}
commute for all $d,e$ and $f$ in $\cD$.
Here the sub indices of the weak braiding $\mathsf{b}$ and the associativity isomorphism $\mathsf{a}$ have been omitted. 
A \emph{weak braided monoidal category} is a monoidal category equipped with a weak braiding. 
Note that if all the morphisms $\mathsf{b}_{d,e}$ are isomorphisms, then $\mathsf{b}$ is a braiding for the monoidal category.

\begin{remark}
 The notion of a weak braided monoidal category found in \cite{BFSV03} and \cite{FSV13} differs from the definition given here, in that the underlying monoidal structure is required to be strictly associative and strictly unital. This is not a significant difference since each weak braided monoidal category is equivalent to a weak braided strict monoidal category, along monoidal functors preserving the weak braiding. The proof of this is similar to the proof of the analogous result for braided monoidal structures.
 \end{remark}

Weak braided monoidal categories have not been much studied in the literature, so before we proceed we will look at some examples to show how such structures naturally arise. The first example, the disjoint union of the braid monoids, is somehow the canonical example.

\begin{example}
Let $\cB_m^+$ denote the braid monoid on $m$ strings with the following presentation:$$\langle\sigma_1,\ldots,\sigma_{m-1}\,|\,\sigma_i\sigma_j=\sigma_j\sigma_i \text{ if } |i-j|>1 \text{ and } 
\sigma_i\sigma_{i+1}\sigma_i =\sigma_{i+1}\sigma_i\sigma_{i+1}\rangle.$$
The elements in $\cB_m^+$ are called positive braids on $m$ strings, or just positive braids. 

Let $\cB^+$ denote the category with one object $\mathbf{m}$ for each integer $m\geq 0$, with endomorphisms of $\mathbf{m}$ the braid monoid $\cB_m^+$ and no other morphisms. This is a strict monoidal category with $\mathbf{m}\otimes\mathbf{n}= \mathbf{m\!+\!n}$, and $\mathbf{0}$ as a unit. The weak braiding from $\mathbf{m}\otimes\mathbf{n}$ to $\mathbf{n}\otimes\mathbf{m}$ is given by the positive braid 
$$(\sigma_n\cdots\sigma_{m+n-1})\cdots(\sigma_2\cdots\sigma_{m+1})(\sigma_1\cdots\sigma_{m}),$$
braiding the first $m$ strings over the last $n$ strings. This is the same as the usual braiding in the classical braid category, which is the disjoint union of the braid groups, see \cite[Example~2.1]{JS93}.
\end{example}

\begin{example}
We consider the category of  non-negatively graded abelian groups. An object $G$ is a collection of abelian groups $G_n$ for $n\geq0$. A morphism $f\co G\ra H$ consists of group homomorphisms $f_n\co G_n\ra H_n$ for $n\geq 0$. This category has a monoidal product given by 
$$(G\otimes H)_n= \bigoplus_{n_1+n_2=n} G_{n_1}\otimes H_{n_2}.$$
Now fix an integer $k$. For $g\in G_{n_1}$ and $h\in H_{n_2}$ the assignment $g\otimes h\mapsto k^{n_1 n_2} h\otimes g$ induces a map from $G_{n_1}\otimes H_{n_2}$ to $H_{n_2}\otimes G_{n_1}$, which in turn induces a homomorphism $(G\otimes H)_n \ra (H\otimes G)_n$. The collection of such maps gives a weak braiding for the category of  non-negatively graded abelian groups. 
Note that if $k$ is a unit, i.e. $\pm 1$, then the weak braiding is an actual braiding. 

This example may be generalized to the category of non-negatively graded $R$-modules for any commutative ring $R$. Pick an element in $R$ to play the role of $k$ in the weak braiding.
\end{example}

A much studied construction is the center of a monoidal category, which can be endowed with a braided monoidal structure, see for instance Example~2.3 in \cite{JS93}. Our next example is a weak version of this. 

\begin{example}
 Let $\cD$ be a strict monoidal category with monoidal unit $i$. We consider pairs $(d,\delta)$ where $d$ is an object in $\cD$ and $\delta$ is a natural transformation $\delta\co d\otimes (-) \ra (-)\otimes d$ such that $\delta_i= \id_d$ and such that for any two objects $x,y\in\cD$ the triangle
 $$
 \xymatrix{
 d\otimes x\otimes y \ar[rr]^{\delta_x\otimes \id_y} \ar[dr]_{\delta_{x\otimes y}} 
 && x \otimes d\otimes y \ar[dl]^{\id_x\otimes\delta_y } \\
 &x\otimes y \otimes d
 }$$
 commutes.
 An arrow between two pairs $(d,\delta)\ra (e,\epsilon)$ consists of a morphism $\phi\co d\ra e$ such that for all $x\in\cD$ the identity  $\epsilon_x\circ(\phi\otimes \id_x)=(\id_x\otimes \phi)\circ \delta_x$ holds. We can define a monoidal product of two such pairs by setting 
 $$(d, \delta)\otimes (e,\epsilon)=(d\otimes e, (\delta\otimes\id_e)\circ(\id_d\otimes\epsilon)).$$
 The collection of morphisms 
 $$\delta_e\co (d, \delta)\otimes (e,\epsilon) \ra  (e,\epsilon)\otimes(d, \delta)$$
 satisfies the conditions for a weak braiding on this category of pairs and arrows. We call this the weak center of $\cD$.
 
 The requirement that $\cD$ should be strictly associative and strictly unital was only a matter of convenience. A similar construction works for any monoidal category, details are left to the interested reader.
\end{example}

\subsection{Operadic interpretation of weak braided monoidal structures} 
When the underlying monoidal multiplication is strict, weak braided monoidal categories are the algebras over a certain $\Cat$-operad.
By a $\Cat$-operad we understand an operad internal to the category $\Cat$ of small categories.
Following \cite[Section~8]{FSV13} we introduce the $\Cat$-operad $\br$ such that $\br$-algebras are weak braided strict monoidal categories. The objects of $\br(k)$ are the elements $A\in \Sigma_k$.
Let $p\co \cB_k^+\ra\Sigma_k$ denote the projection of the braid monoid onto the corresponding symmetric group. Then a morphism $\alpha\colon A\to B$ in $\br(k)$ is a positive braid $\alpha\in \cB_k^+$ such that 
$p(\alpha) A=B$.
 Composition in $\br(k)$ is given by multiplication in $\cB_k^+$. The category $\br(k)$ has a right action of the symmetric group on $k$ letters defined on objects and morphisms by sending $\alpha\colon A\to B$ to $\alpha\colon Ag\to Bg$ for $g\in\Sigma_k$. The operad structure map 
$$\gamma\colon \br(k)\times \br(j_1)\times\dots\times\br(j_k)\to \br(j_1+\dots+j_k)$$
takes the tuple $(A,B_1,\dots,B_k)$ to
$$A(j_1,\dots,j_k)\circ (B_1\sqcup\dots\sqcup B_k).$$
Here $A(j_1,\dots,j_k)$ denotes the canonical block permutation obtained from $A$ by replacing the $i$th letter with $j_i$ letters. The action on morphisms is analogous except for the obvious permutation of the indices. 
It is easy to check that the category of weak braided monoidal categories with weak braiding preserving strict monoidal functors is isomorphic to $\br$-algebras. See for instance the argument given in Section~5.1 in \cite{SS14} for the braided monoidal version.
We denote the category of $\br$-algebras by $\br$-$\Cat$. 

\section{Homotopy colimits of weak braided monoidal categories}\label{section hocolim}

In \cite[Definition~4.10]{FSV13} there is a general homotopy colimit construction for a diagram of algebras over a $\Cat$-operad. Let $\cL$ be a small category and consider 
the category $(\br\text{-}\Cat)^\cL$ of functors $\cL\ra \br\text{-}\Cat$ and natural transformations. The above mentioned construction gives in particular a functor 
$${\hocolim_\cL}^{\br}\co (\br\text{-}\Cat)^\cL \ra \br\text{-}\Cat.$$

\subsection{The homotopy type of the homotopy colimit}\label{subsection hom}
Let $\cS$ be the category of simplicial sets and let $N$ be the nerve functor from $\Cat$ to $\cS$.   If we apply $N$ levelwise to the $\Cat$-operad $\br$, we get an operad $N\br$ internal to the category $\cS$. We denote the category of algebras over $N\br$ as $N\br\text{-}\cS$. A morphism of $N\br$-algebras is called a weak equivalence if the underlying simplicial set map is a weak equivalence. 
For a diagram $W\co\cL\ra (N\br\text{-}\cS)$, let $\hocolim_\cL^{N\br}W $ denote the coend construction $N(-/\cL)\otimes_\cL QW$, where $Q$ is an object wise cofibrant replacement functor. This is the homotopy colimit of $QW$ from Definition~18.1.2 in \cite{MR1944041}. 
If $X$ is in $(\br\text{-}\Cat)^\cL$, then there is a natural map 
$${\hocolim_\cL}^{N\br}NX \ra N({\hocolim_\cL}^{\br}X),$$
see the paragraph before Definition~6.7 \cite{FSV13}. This is an operadic version of Thomason's map in Lemma~1.2.1 \cite{Tho79}.
The question if this map is a weak equivalence or not, was left open  in \cite{FSV13}.
Our main result provides a positive answer to this problem.

\begin{theorem}\label{theorem main}
The diagram 
 $$\xymatrix{(\br\text{-}\Cat)^\cL \ar[r]^N \ar[d]_{\hocolim_\cL^{\br}}
 & (N\br\text{-}\cS)^\cL \ar[d]^{\hocolim_\cL^{N\br}} \\
 \br\text{-}Cat \ar[r]^N & N\br\text{-}\cS
 }$$
 commutes up to weak equivalence of $N\br$-algebras.
\end{theorem}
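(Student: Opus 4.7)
The plan is to adapt Thomason's approach from \cite{Tho79} to the operadic setting of \cite{FSV13}: express both the source and the target as realizations of a common bisimplicial object, and verify the comparison map is a weak equivalence by a levelwise analysis that ultimately depends on properties of the braid monoids $\cB_k^+$.

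First I would unpack the construction of $\hocolim_\cL^{\br} X$ from Definition~4.10 of \cite{FSV13} into an explicit operadic Grothendieck-style category: objects are tuples $(A,\underline\ell,\underline x)$ with $A\in\br(k)$, with $\underline\ell$ a $k$-tuple of objects of $\cL$ and $\underline x$ a $k$-tuple with $x_i\in X(\ell_i)$, and morphisms combine operadic multiplication in $\br$ with morphisms of $\cL$ and of the $X(\ell_i)$'s. I would then compare $N$ of this category with the bar-construction model of $\hocolim_\cL^{N\br}NX$ via a bisimplicial set $B_{\bullet,\bullet}$ whose $(p,q)$-entry is built from $N\br(k)\times N(X(\ell_0)\to\cdots\to X(\ell_p))^k$. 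Since the nerve commutes with products, one realization recovers $N(\hocolim^{\br}X)$ and the other recovers $\hocolim^{N\br}NX$, and the Thomason-style map is the canonical shuffle between them.

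Given this setup, the theorem reduces to verifying that the operad $N\br$ acts on the bar construction with the expected homotopy type, and this is where the braid-monoid analysis enters. The key input I would establish is that the inclusion $\cB_k^+\hookrightarrow \cB_k$ into the braid group induces a weak equivalence $N\cB_k^+\simeq B\cB_k$ of classifying spaces, promoted to an operadic statement: there is a zig-zag of levelwise weak equivalences between $N\br$ and the nerve of the braided operad $\B$ built from braid groups, compatible with operadic composition. Once this is in place, the braided analogue of the theorem already proved in \cite{FSV13} transports along the zig-zag to yield the result for $\br$.

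The main obstacle, and presumably the ``analysis of braid monoids'' alluded to in the Introduction, is the compatibility of $\cB_k^+\simeq \cB_k$ with operadic composition and with the $\br$-action on the diagram $X$: a bare levelwise equivalence of operads need not induce an equivalence of operadic homotopy colimits. I would attack this by using the Garside normal form on $\cB_k^+$ to build either an explicit contracting homotopy or, more likely, a Quillen Theorem~A argument showing that a suitable comparison functor between intermediate categories has contractible homotopy fibers. The delicate point is that morphisms in $\br(k)$ are positive braids projecting to a prescribed permutation, so the cofinality argument must track both the Garside data and the permutation data simultaneously; carrying this through across all arities and compatibly with the operadic composition $\gamma$ is what I expect to be the technical core of the proof.
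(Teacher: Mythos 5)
Your overall architecture does not match what is actually needed, and the central step --- transporting the braided-monoidal case of the theorem along a zig-zag of levelwise equivalences $N\br\simeq N$(braid group operad) --- has a genuine gap. The theorem is a statement about the \emph{categorical} construction $\hocolim_\cL^{\br}X$ for a diagram $X$ of $\br$-algebras. A weak braided monoidal category is not a braided monoidal category, so there is no diagram of algebras over the braid group operad to which the already-proved braided case could be applied; producing one is precisely the rectification problem treated separately in \cite{FSV}, not something that follows from the inclusion $\cB_k^+\hookrightarrow\cB_k$ inducing equivalences of nerves. You acknowledge that a bare levelwise equivalence of operads does not suffice, but the proposed repair (Garside normal forms, Quillen's Theorem~A on unspecified comparison functors) is not pinned to any concrete statement, and the statement you do propose to prove, $N\cB_k^+\simeq B\cB_k$, is neither used in nor sufficient for the argument: what must be shown is a finer, ``relative'' property of the braid monoids, not that each monoid is group-complete up to homotopy.

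The paper's route is different and much more direct. The comparison map and the reduction are both taken off the shelf from \cite{FSV13}: Theorem~6.10 there says the Thomason-style map is a weak equivalence provided $\br$ satisfies the \emph{factorization condition} (Definition~6.8), namely that every factorization category $\cC(M,N,s_1,\ldots,s_n)$ --- a category whose objects are positive braids $\alpha\in\cB_m^+$ satisfying a permutation constraint, with morphisms given by left multiplication by block braids $\gamma_1\oplus\cdots\oplus\gamma_n$ --- has an initial object in each connected component. So there is no bisimplicial bookkeeping to redo and no change of operads; the entire content is the combinatorial statement about these posets of positive braids. That statement is proved using exactly the Garside-theoretic input you sensed was relevant, but in a different form: existence and uniqueness of right least common multiples in $\cB_k^+$, left cancellation (via the embedding of the braid monoid into the braid group), and the observation that the right lcm of two block-diagonal positive braids is again block-diagonal (because each defining relation of $\cB_k^+$ uses the same generators on both sides). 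These facts yield, for each object $\alpha$, a unique minimal $\nu_\alpha\le\alpha$, whence a unique minimal (hence initial) object per connected component. If you want to salvage your write-up, replace the transport argument by the verification of the factorization condition and redirect the Garside analysis at the factorization categories themselves.
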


The operad $N\br$ is an $E_2$-operad, see Proposition~8.13 in \cite{FSV13}. The above theorem gives one way to relate weak braided monoidal categories and $E_2$-spaces as seen in the corollary below. Fiedorowicz, Stelzer and Vogt obtain the same equivalence without using the homotopy colimit construction of $\br$-algebras in \cite{FSV}. 

\begin{corollary}\label{corollary}
We have an equivalence of localized categories 
$$(\br\text{-}\Cat)[we\inv]\simeq (N\br\text{-}\cS)[we\inv].$$
\end{corollary}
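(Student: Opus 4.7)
The plan is to derive the equivalence from Theorem~\ref{theorem main} by checking that the induced functor on localizations is essentially surjective and fully faithful. First declare a morphism in $\br\text{-}\Cat$ to be a weak equivalence precisely when its image under $N$ is one; with this convention $N$ tautologically descends to a functor $\bar N\co(\br\text{-}\Cat)[we\inv]\to(N\br\text{-}\cS)[we\inv]$, and one only needs to show that $\bar N$ is an equivalence of categories.

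For essential surjectivity, take $Y$ in $N\br\text{-}\cS$ and resolve it via the two-sided monadic bar construction $B_\bullet(N\br, N\br, Y)$ coming from the free $N\br$-algebra monad; this is a simplicial object in $N\br\text{-}\cS$ each of whose levels is a free $N\br$-algebra on a simplicial set, and whose homotopy colimit over $\Delta^{op}$ is weakly equivalent to $Y$. Because the nerve of the free $\br$-algebra on a small category $C$ is the free $N\br$-algebra on $NC$, and because every simplicial set is weakly equivalent to the nerve of its category of simplices, one can lift $B_\bullet(N\br, N\br, Y)$ to a simplicial object $\tilde X_\bullet$ in $\br\text{-}\Cat$ together with a levelwise weak equivalence $N\tilde X_\bullet \simeq B_\bullet(N\br, N\br, Y)$. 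Setting $X={\hocolim_{\Delta^{op}}}^{\br}\tilde X_\bullet$ and applying Theorem~\ref{theorem main} produces a chain of weak equivalences
$$NX\simeq {\hocolim_{\Delta^{op}}}^{N\br} N\tilde X_\bullet \simeq {\hocolim_{\Delta^{op}}}^{N\br} B_\bullet(N\br, N\br, Y) \simeq Y,$$
showing that $\bar N[X]=[Y]$.

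For full faithfulness, apply the same strategy on the source of a morphism in the localization. Any $\br$-algebra is recovered, up to weak equivalence, as the homotopy colimit of its categorical bar resolution by free $\br$-algebras, and this simplicial object is taken by $N$ to the bar resolution on the simplicial side. Since $N$ commutes with free operadic algebras and, by Theorem~\ref{theorem main}, with the homotopy colimits used to glue them together, the induced map on derived mapping sets is a bijection. A standard formal argument with such resolutions then yields that $\bar N$ is fully faithful.

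The main technical obstacle will be carrying out the lift of $B_\bullet(N\br, N\br, Y)$ to a genuine simplicial object in $\br\text{-}\Cat$ in a way that is compatible with all of the face and degeneracy maps; only the levels are naturally of the form $N(\text{free})$, not the structure maps between them. A clean way to handle this is to invoke the Thomason model structure on $\Cat$, which makes the nerve a Quillen equivalence between $\Cat$ and $\cS$ and therefore allows a functorial transport of the whole simplicial resolution from simplicial sets to categories; once this lift is in place, the remainder of the proof is a diagram chase built directly on Theorem~\ref{theorem main}.
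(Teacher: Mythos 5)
The paper's own proof is a two-line citation: Theorem~\ref{theorem main} verifies precisely the hypothesis needed to apply the general comparison result, Theorem~7.6 of \cite{FSV13}, which yields the equivalence of localizations, together with Proposition~A.1 of \cite{SS14} guaranteeing that $(\br\text{-}\Cat)[we\inv]$ exists as a category. You instead try to construct an inverse by hand via two-sided bar resolutions, which is closer in spirit to the rectification argument of \cite{FSV}. That route could in principle be carried out, but as written it has genuine gaps.

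The central gap is the one you flag yourself and then dismiss: lifting $B_\bullet(N\br,N\br,Y)$ to a simplicial object in $\br\text{-}\Cat$ compatibly with all faces and degeneracies. The Thomason model structure gives a Quillen equivalence between $\Cat$ and $\cS$ as \emph{underlying} categories, with right adjoint $\mathrm{Ex}^2 N$ and left adjoint $c\,\mathrm{Sd}^2$; neither adjoint intertwines the free $\br$-algebra monad with the free $N\br$-algebra monad, so it does not ``functorially transport'' an operadic simplicial resolution from $\cS$ to $\Cat$. Making the levelwise identification of $N(\text{free }\br\text{-algebra})$ with the free $N\br$-algebra interact correctly with the simplicial structure maps is exactly the hard content, and it is not supplied. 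Second, your full-faithfulness step rests on ``a standard formal argument'': in a bare localization at weak equivalences there is no a priori description of the hom-sets, so comparing derived mapping sets requires either (semi)model structures on both sides or the explicit zig-zag analysis that Theorem~7.6 of \cite{FSV13} encapsulates; nothing in your sketch provides this. Third, you never address whether $(\br\text{-}\Cat)[we\inv]$ is an honest locally small category, which the paper handles by citing \cite{SS14}. The efficient repair is the paper's own argument: having established Theorem~\ref{theorem main}, quote the general theorem rather than re-derive it.
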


\begin{proof}
 Theorem~\ref{theorem main} shows that Theorem~7.6 in \cite{FSV13} applies to the operad  $\br$. The corollary then follows from the latter theorem with the added observation that the localization $(\br\text{-}\Cat)[we\inv]$ exists, see Proposition~A.1 in \cite{SS14}.
\end{proof}

By general theory (details will be provided later), the proof of the theorem reduces to showing that certain categories have the property that each connected component has an initial object. 
Fix an $A\in\Sigma_m$, a $B\in \Sigma_n$, and non-negative integers $r_1, \ldots, r_n$ such that $r_1+ \cdots +r_n= m$. Let $\tilde{B}$ denote the canonical block permutation $B(r_1, \ldots, r_n)\in\Sigma_m$ obtained from $B$ by replacing the $i$th letter with $r_i$ letters. We define a poset category $\cC$ depending on $A$, $B$ and $r_1, \ldots, r_n$ . The objects in $\cC$ are the positive braids $\alpha\in \cB_m^+$ such that 
 $$p(\alpha)A\tilde{B} \in(\Sigma_{r_1}\times\cdots\times\Sigma_{r_n}) \subseteq\Sigma_m.$$
There is a morphism $\alpha \leq \beta$ from $\alpha$ to $\beta$ in $\cC$ if there exit $\gamma_i\in\cB_{r_1}^+$ for $i=1,\ldots,n$ such that 
 $(\gamma_1\oplus\cdots\oplus\gamma_n)\alpha=\beta$ in $\cB_m^+$.

\subsection{Analysis of minimal positive braids in $\cC$}
We call an object $\nu$ in $\cC$ a \emph{minimal object} if for all objects $\nu'$ in $\cC$, $\nu'\leq \nu$ implies $\nu' = \nu$. Define the norm  $|\beta|\in \mathbb N_0$ of an element $\beta$ in $\cB_m^+$, as the length of any word representing $\beta$. This is well defined because the relations in 
the presentation of $ \cB_m^+$ only involve equations with the same number of generators on each side. Details on this word problem can be found in  Sections~6.1.3 and 6.5.1 in \cite{KT08}. Then it is immediate from the definition that $\nu$ is a minimal object 
if and only if $\nu\neq(\gamma_1\oplus\cdots\oplus\gamma_n)\nu'$ for all
$\gamma_i\in B_{r_i}^+$ and all $\nu'\in \cC$ with $|\nu'|<|\nu|$.

\begin{proposition}\label{prop unique morphism}
Given an object $\alpha$ in $\cC$ there is a unique minimal object $\nu_\alpha$ such that $\nu_\alpha\leq \alpha$.
\end{proposition}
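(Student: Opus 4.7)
Existence of a minimal $\nu_\alpha \leq \alpha$ is immediate: if $\alpha$ is not already minimal then by definition some $\nu' \in \cC$ with $\nu' \leq \alpha$ and $|\nu'|<|\alpha|$ exists, and iterating terminates because $|\cdot|$ is bounded below by $0$.

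For uniqueness I plan to set up a terminating rewriting system on $\cC$ and invoke Newman's diamond lemma. Call $\sigma_j$ a \emph{block generator} when $j \notin \{r_1,\, r_1+r_2,\, \ldots,\, r_1+\cdots+r_{n-1}\}$; these are precisely the standard generators of the submonoid $\cB_{r_1}^+\oplus\cdots\oplus\cB_{r_n}^+ \subseteq \cB_m^+$. Declare an elementary reduction $\alpha \to \beta$ on $\cC$ whenever $\alpha = \sigma_j \beta$ with $\sigma_j$ a block generator. Because $p(\sigma_j)\in \Sigma_{r_1}\times\cdots\times\Sigma_{r_n}$, multiplying the condition $p(\alpha)A\tilde B\in \Sigma_{r_1}\times\cdots\times\Sigma_{r_n}$ on the left by $p(\sigma_j)^{-1}$ shows $\beta \in \cC$. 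Furthermore an object of $\cC$ is minimal exactly when it admits no elementary reduction, and the order $\nu \leq \alpha$ from the statement coincides with $\alpha \to^* \nu$, since every element of the block submonoid factors as a product of block generators.

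The system clearly terminates since each step drops $|\cdot|$ by one, so by Newman's lemma it suffices to verify local confluence: given $\alpha \to \beta_1$ via $\sigma_j$ and $\alpha \to \beta_2$ via $\sigma_k$, I must produce $\eta \in \cC$ reached from both $\beta_1$ and $\beta_2$ by elementary reductions. Here I invoke the Garside/lattice structure of the positive braid monoid: two elements with a common right multiple possess a left lcm, and for standard generators this lcm is explicit, namely $\sigma_j$ if $j=k$, $\sigma_j\sigma_k = \sigma_k\sigma_j$ if $|j-k|\geq 2$, and $\sigma_j\sigma_{j+1}\sigma_j = \sigma_{j+1}\sigma_j\sigma_{j+1}$ if $|j-k|=1$. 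Since $\sigma_j$ and $\sigma_k$ both left-divide $\alpha$, so does their lcm $L$, giving $\alpha = L\eta$ and supplying the two desired reduction paths to $\eta$.

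The main obstacle, and the one step needing care, is the braid case $|j-k|=1$: for the intermediate steps to count as elementary reductions in $\cC$, each letter of $L = \sigma_j\sigma_{j+1}\sigma_j$ must be a block generator. This reduces to a short index check showing that if both $\sigma_j$ and $\sigma_{j+1}$ avoid the separator indices $r_1, r_1+r_2, \ldots$, they must be generators of one and the same $\cB_{r_i}^+$, so $L\in\cB_{r_i}^+$. The closure of $\cC$ under reductions noted above then keeps the intermediate $\eta$ inside $\cC$. With local confluence established, Newman's lemma yields the unique normal form $\nu_\alpha\leq\alpha$.
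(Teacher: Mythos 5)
Your proof is correct, but the uniqueness argument takes a genuinely different route from the paper's. The paper argues directly on arbitrary block elements: given two minimal objects with $\gamma\nu_\alpha=\alpha=\gamma'\nu_\alpha'$, it first establishes a separate lemma that the right least common multiple in $\cB_m^+$ of two elements of $\cB_{r_1}^+\times\cdots\times\cB_{r_n}^+$ is the blockwise $\oplus$ of the lcms (the key input being that the set of generators occurring in a word representing a positive braid is an invariant of that braid), then writes $\alpha=\mathrm{lcm}(\gamma,\gamma')\omega$, cancels on the left, and concludes $\omega\leq\nu_\alpha$ and $\omega\leq\nu_\alpha'$, forcing $\nu_\alpha=\omega=\nu_\alpha'$. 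You instead localize to single generators and let Newman's lemma do the globalization: all you need is the explicit right lcm of two Artin generators ($\sigma_j$, $\sigma_j\sigma_k$, or $\sigma_j\sigma_k\sigma_j$) together with the observation that two adjacent block generators necessarily lie in the same factor $\cB_{r_i}^+$, and the closure of $\cC$ under left division by a block generator. This bypasses the paper's lemma on lcms of block elements entirely, at the cost of importing the rewriting formalism; both routes ultimately rest on the same Garside-theoretic facts about $\cB_m^+$ (existence of right lcms and cancellativity). Two small points to tidy: what you invoke is the least common \emph{right} multiple of $\sigma_j$ and $\sigma_k$ (they are common left divisors of $\alpha$), so ``left lcm'' is a terminological slip relative to the paper's conventions; and the passage from $\alpha=\sigma_jL_1\eta$ to $\beta_1=L_1\eta$ silently uses left cancellation in $\cB_m^+$ (which holds because the braid monoid embeds in the braid group), a fact you should state explicitly, as the paper does.
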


Before we prove this result we will review the concept of least common multiples in a monoid. A right common multiple of two elements $\gamma$ and $\gamma'$ in $\cB_k^+$  is an element in 
$\cB_k^+$ that is of the form 
$\gamma\phi=\gamma'\phi'$ for some $\phi$ and $\phi'$ in $\cB_k^+$. 
A right least common multiple of $\gamma$ and $\gamma'$ is an element 
$\mathrm{lcm}_k(\gamma,\gamma')\in \cB_k^+$ such that $\mathrm{lcm}_k(\gamma,\gamma')$ 
is a right common multiple of $\gamma$ and $\gamma'$, and such that any right common multiple 
of $\gamma$ and $\gamma'$ is of the form $\mathrm{lcm}_k(\gamma,\gamma')\omega$ 
for some $\omega\in \cB_k^+$.
A unique right least common multiple of $\gamma$ and $\gamma'$ exists for any two elements 
$\gamma$ and $\gamma'$ in $\cB_k^+$, see Theorem 6.5.4 in \cite{KT08}. Since we will only be dealing with right least common multiples, and not left least common multiples, the notation $\mathrm{lcm}$ will not be ambiguous. 

\begin{lemma}
Given $\gamma_i,\gamma_i'\in\cB_{r_i}^+$ for $i=1,\ldots,n$, let $\gamma=\gamma_1\oplus\cdots\oplus\gamma_n$ and similarly $\gamma'=\gamma_1'\oplus\cdots\oplus\gamma_n'$. 
Then the least common multiple $\mathrm{lcm}_r(\gamma, \gamma')$ in $\cB_{r}^+$ of $\gamma$ and $\gamma'$ is equal to
$$
\mathrm{lcm}_{r_1}(\gamma_1,\gamma_1')\oplus\cdots\oplus\mathrm{lcm}_{r_n}(\gamma_n,\gamma_n').$$
\end{lemma}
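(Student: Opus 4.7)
My plan is to show that $L := \mathrm{lcm}_{r_1}(\gamma_1,\gamma_1') \oplus \cdots \oplus \mathrm{lcm}_{r_n}(\gamma_n,\gamma_n')$ coincides with $M := \mathrm{lcm}_r(\gamma,\gamma')$ by proving mutual left-divisibility. The easy direction $M$ left-divides $L$ follows from the fact that $L$ is itself a common right multiple of $\gamma$ and $\gamma'$: writing $\mathrm{lcm}_{r_i}(\gamma_i,\gamma_i') = \gamma_i\phi_i$ and using that strings in distinct blocks do not interact, so that
$$(\alpha_1\oplus\cdots\oplus\alpha_n)(\beta_1\oplus\cdots\oplus\beta_n) = (\alpha_1\beta_1)\oplus\cdots\oplus(\alpha_n\beta_n) \quad \text{in } \cB_r^+,$$
one obtains $L = \gamma\cdot(\phi_1\oplus\cdots\oplus\phi_n)$, and symmetrically for $\gamma'$.

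The substantive direction rests on what I will call the \emph{divisor-closure} property for the submonoid $H := \cB_{r_1}^+\oplus\cdots\oplus\cB_{r_n}^+ \subseteq \cB_r^+$: namely, if $\alpha \in H$ and $\alpha = \beta\rho$ with $\beta,\rho \in \cB_r^+$, then $\beta,\rho \in H$. To establish this I would introduce the \emph{support} of a positive braid word as the set of indices $i$ for which the generator $\sigma_i$ occurs; the two defining relations $\sigma_i\sigma_j = \sigma_j\sigma_i$ (for $|i-j|>1$) and $\sigma_i\sigma_{i+1}\sigma_i = \sigma_{i+1}\sigma_i\sigma_{i+1}$ each involve the same set of indices on both sides, so the support is invariant under the relations and therefore descends to a well-defined invariant of the braid. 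The submonoid $H$ is precisely the set of positive braids whose support avoids the block boundaries $r_1, r_1+r_2,\ldots, r_1+\cdots+r_{n-1}$. Since a concatenation of words representing $\beta$ and $\rho$ is a word for $\alpha$ whose support is $\mathrm{supp}(\beta)\cup\mathrm{supp}(\rho)$, divisor-closure is immediate.

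Applying divisor-closure to a factorization $L = M\tau$ (which exists because $M$ left-divides $L$) places $M$ inside $H$. Applying it once more to the factorizations $M = \gamma\phi = \gamma'\phi'$ places $\phi$ and $\phi'$ in $H$. Decomposing $M = \mu_1\oplus\cdots\oplus\mu_n$ and $\phi, \phi'$ analogously by blocks, the equations $M = \gamma\phi = \gamma'\phi'$ restrict to $\mu_i = \gamma_i\phi_i = \gamma_i'\phi_i'$ for each $i$, so $\mathrm{lcm}_{r_i}(\gamma_i,\gamma_i')$ left-divides $\mu_i$; summing over blocks yields that $L$ left-divides $M$. Combined with the reverse divisibility and the additive length function $|\cdot|$ on $\cB_r^+$, which vanishes only on the identity, this forces $M = L$. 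I expect the main obstacle to be verifying carefully that the support is in fact a well-defined braid invariant; once that is in place everything else is formal manipulation of the block structure.
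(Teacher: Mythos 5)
Your proof is correct and follows essentially the same route as the paper: the easy divisibility from $L$ being a common right multiple, and then the key observation that the support of a positive braid word is invariant under both defining relations, giving divisor-closure of the block submonoid and hence the reverse divisibility. The paper leaves your final blockwise comparison as ``straightforward to check,'' so your write-up simply makes that step explicit.
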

\begin{proof}
 It is clear that $\mathrm{lcm}_{r_1}(\gamma_1,\gamma_1')\oplus\cdots\oplus\mathrm{lcm}_{r_n}(\gamma_n,\gamma_n')$ is a right multiple of both $\gamma$ and $\gamma'$. Therefore 
 $$\mathrm{lcm}_r(\gamma, \gamma')\phi= 
\mathrm{lcm}_{r_1}(\gamma_1,\gamma_1')\oplus\cdots\oplus\mathrm{lcm}_{r_n}(\gamma_n,\gamma_n')$$
for some $\phi\in \cB_{m}^+$. We claim that 
$\mathrm{lcm}_r(\gamma, \gamma')$ lies in $\cB_{r_1}^+\times\cdots\times\cB_{r_n}^+$ as well. From this it is straightforward to check that $\mathrm{lcm}_{r_1}(\gamma_1,\gamma_1')\oplus\cdots\oplus\mathrm{lcm}_{r_n}(\gamma_n,\gamma_n')$ is the right least common multiple of $\gamma$ and $\gamma'$ in the monoid $\cB_{r_1}^+\times\cdots\times\cB_{r_n}^+$ and the result follows.

 The claim made above follows from a more general observation that if we have two positive braids $\alpha, \beta\in\cB_r^+$  such that their
product $\beta\alpha$ lies in  $\cB_{r_1}^+\times\cdots\times\cB_{r_n}^+$, then both $\alpha$ and $\beta$ have to lie in $\cB_{r_1}^+\times\cdots\times\cB_{r_n}^+$ as well. This is a consequence of the nature of the presentation of $\cB_{r}^+$ given earlier. For each of the relations in the presentation, the two sides of the equation involves exactly the same generators. This implies that if a generator is present in one word representing a positive braid, it will be present in all words representing that positive braid. For details on the word problem in braid monoids, see Section~6.1.5 in \cite{KT08}.
\end{proof}

\begin{proof}[Proof of Proposition \ref{prop unique morphism}]
We first prove the existence of $\nu_\alpha$. If $\alpha$ is not a minimal object, there exists an object $\alpha_1\in \cC$ such that $\alpha_1\leq\alpha$ and $|\alpha_1|<|\alpha|$.
We repeat this process as many times as necessary until we obtain a minimal $\alpha_k$ with $\alpha_k\leq\alpha_{k-1}$ and $|\alpha_k|<|\alpha_{k-1}|$. The process terminates after a finite number of steps since the norm of the $\alpha_i$'s decrease strictly each time. We set $\nu_\alpha=\alpha_k$, so by construction $\nu_\alpha\leq\alpha$.

We now turn to the uniqueness of $\nu_\alpha$.
 Suppose there are two minimal objects $\nu_\alpha$ and $\nu_\alpha'$ such that both $\nu_\alpha\leq \alpha$ and $\nu_\alpha'\leq \alpha$. Then $\alpha$ equals both $(\gamma_1\oplus\cdots\oplus\gamma_n)\nu_\alpha$ and  $(\gamma_1'\oplus\cdots\oplus\gamma_n')\nu_\alpha'$ for some $\gamma_i,\gamma_i'\in\cB_{r_i}^+$, $i=1,\ldots,n$.  
 Abbreviating $\gamma_1\oplus\cdots\oplus\gamma_n$ to $\gamma$
and $\gamma_1'\oplus\cdots\oplus\gamma_n'$ to $\gamma'$, we recall that 
$$\mathrm{lcm}_r(\gamma, \gamma') = 
\mathrm{lcm}_{r_1}(\gamma_1,\gamma_1')\oplus\cdots\oplus\mathrm{lcm}_{r_n}(\gamma_n,\gamma_n').$$
Since $\alpha$ is a right common multiple of  both $\gamma$ and $\gamma'$, 
$$\alpha=(\mathrm{lcm}_{r_1}(\gamma_1,\gamma_1'))\oplus\cdots\oplus(\mathrm{lcm}_{r_n}(\gamma_n,\gamma_n'))\omega $$
for some $\omega\in\cB_m^+$.
The right least common multiple of $\gamma_i$ and $\gamma_i'$ is in particular a 
right common multiple of $\gamma_i$ and $\gamma_i'$, so
$ \mathrm{lcm}(\gamma_i,\gamma_i')=\gamma_i\phi_i=\gamma_i'\phi_i'$
for some $\phi_i,\phi_i'\in\cB_{r_i}^+$, $i=1,\ldots,n$. Combining this we get 
that 
\begin{align*}
 (\gamma_1\oplus\cdots\oplus\gamma_n)\nu_\alpha=\alpha&=(\gamma_1\oplus\cdots\oplus\gamma_n)(\phi_1\oplus\cdots\oplus\phi_n)\omega \quad\text{and} \\
 (\gamma_1'\oplus\cdots\oplus\gamma_n')\nu_\alpha'=\alpha&=(\gamma_1'\oplus\cdots\oplus\gamma_n')(\phi_1'\oplus\cdots\oplus\phi_n')\omega.
\end{align*}
The braid monoid injects into the corresponding braid group \cite[Theorem~6.5.4]{FSV13}, so we can apply left cancellation to the above equations to obtain 
\begin{align*}
 \nu_\alpha=(\phi_1\oplus\cdots\oplus\phi_n)\omega \quad\text{ and } \quad
 \nu_\alpha'=(\phi_1'\oplus\cdots\oplus\phi_n')\omega.
\end{align*}
It is straightforward to check that $p(\omega)A\tilde{B}$ is in 
$\Sigma_{r_1}\times\cdots\times\Sigma_{r_n}$, so that $\omega$ is an object in $\cC$. Then the above equations say that $\omega\leq \nu_\alpha$ and $\omega\leq \nu_\alpha'$ in $\cC$. But since $\nu_\alpha$ and $\nu_\alpha'$ are minimal objects these maps have 
to be identities. This proves the uniqueness of $\nu_\alpha$. 
\end{proof}

\begin{lemma}\label{lemma initial object}
 Each connected component in $\cC$ has an initial object.
\end{lemma}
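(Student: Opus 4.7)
The plan is to use Proposition~\ref{prop unique morphism} directly: the candidate initial object for the connected component containing $\alpha\in\cC$ is the unique minimal $\nu_\alpha\leq \alpha$ produced there. Since $\cC$ is a poset, there is at most one morphism between any two objects, so it suffices to show that $\nu_\alpha$ admits a morphism to every $\beta$ in its connected component.

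First I would show that the assignment $\alpha\mapsto \nu_\alpha$ is constant along single morphisms. Suppose $\alpha\leq\beta$ in $\cC$, so $\beta=(\delta_1\oplus\cdots\oplus\delta_n)\alpha$ for some $\delta_i\in\cB_{r_i}^+$. Composing the morphism $\nu_\alpha\leq \alpha$ with $\alpha\leq\beta$ yields $\nu_\alpha\leq\beta$, exhibiting $\nu_\alpha$ as a minimal object below $\beta$. By the uniqueness part of Proposition~\ref{prop unique morphism}, $\nu_\alpha=\nu_\beta$.

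Connected components in the underlying graph of the poset $\cC$ are generated by zigzags of $\leq$-relations, so iterating the previous step in both directions shows that $\nu_\bullet$ is constant on each connected component of $\cC$. Consequently, for any $\beta$ in the connected component of $\alpha$ we have $\nu_\alpha=\nu_\beta\leq\beta$, giving a (unique) morphism from $\nu_\alpha$ to every object in its component. Hence $\nu_\alpha$ is the initial object of its connected component.

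There is no real obstacle here, since Proposition~\ref{prop unique morphism} has already done the substantive work (existence, uniqueness, and the lcm-based argument used to establish them). The only point to be careful about is that connected components are given by zigzags and not by directed chains, which is why the argument has to be phrased as ``$\nu$ is invariant under every single morphism'' rather than attempting to descend globally.
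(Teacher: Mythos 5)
Your proposal is correct and follows essentially the same argument as the paper: show that $\nu_\alpha=\nu_\beta$ whenever $\alpha\leq\beta$ by noting $\nu_\alpha\leq\alpha\leq\beta$ and invoking the uniqueness in Proposition~\ref{prop unique morphism}, then propagate along zigzags to conclude the unique minimal object of each connected component is initial.
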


\begin{proof}
 Given a morphism $\alpha\leq\beta$ in $\cC$, let $\nu_\alpha$ and $\nu_\beta$ denote the two minimal objects associated to $\alpha$ and $\beta$ respectively, by the previous result. The two objects must be equal since $\nu_\alpha\leq \alpha \leq \beta$, but $\nu_\beta$ is the unique minimal object with $\nu_\beta\leq \beta$.   Hence the minimal objects associated to any two objects in the same connected component has to be equal, and we have a unique minimal object in each connected component of $\cC$. The minimal objects are initial in their respective connected components.
\end{proof}

Fix an $M\in\Sigma_m$, an $N\in \Sigma_n$, and non-negative integers $s_1, \ldots, s_n$ such that $s_1+ \cdots +s_n= m$. Let $\tilde{N}$ denote the canonical block permutation $N(s_1, \ldots, s_n)\in\Sigma_m$ obtained from $N$ by replacing the $i$th letter with $s_i$ letters.
The \emph{factorization category} $\mathcal C (M,N,s_1,\ldots,s_n)$, as defined in \cite[Section 6]{FSV13}, has as objects tuples $(C_1,\ldots,C_n, \alpha)$ consisting of $C_i\in\Sigma_{s_i}$ for $i=1,\ldots n$, and 
$\alpha \in \cB_m^+$ such that 
\begin{equation}\label{eq}
 p(\alpha)M=\tilde{N}(C_1\oplus\cdots\oplus C_n).
\end{equation}
  A morphism  from 
$(C_1,\ldots,C_n, \alpha)$ to $(D_1,\ldots,D_n, \beta)$ consists of elements 
$\gamma_i$ in 
$\cB_{s_i}^+$ for $i=1,\ldots n$ such that $(\gamma_1\oplus\cdots\oplus\gamma_n)\alpha=\beta$.

\begin{lemma}\label{lemma factorization category iso}
 The factorization category $\cC(A,B\inv,r_{B\inv(1)},\ldots,r_{B\inv(n)})$ is isomorphic to the category $\cC$ considered in this section.
\end{lemma}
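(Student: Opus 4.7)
The plan is to define a functor $F$ from $\cC(A, B\inv, r_{B\inv(1)}, \ldots, r_{B\inv(n)})$ to $\cC$ that is the identity on the underlying braid, and to check it is an isomorphism by writing down an explicit inverse.

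The object correspondence rests on two elementary facts about block permutations. First, the block permutation $B\inv(r_{B\inv(1)}, \ldots, r_{B\inv(n)})$ equals $\tilde B\inv$, by direct unpacking of the definition of the canonical block permutation. Second, for any $D_i \in \Sigma_{r_i}$ one has the conjugation identity $\tilde B\,(D_1\oplus \cdots \oplus D_n)\,\tilde B\inv = D_{B\inv(1)}\oplus \cdots \oplus D_{B\inv(n)}$, where the right-hand side is a block-diagonal permutation with blocks of sizes $r_{B\inv(i)}$. Combining these, if $\alpha$ satisfies the $\cC$-condition $p(\alpha) A\tilde B = D_1\oplus\cdots\oplus D_n \in \Sigma_{r_1}\times\cdots\times\Sigma_{r_n}$, then setting $C_i := D_{B\inv(i)}$ one checks that $p(\alpha) A = \tilde B\inv(C_1\oplus\cdots\oplus C_n)$, so $(C_1,\ldots,C_n,\alpha)$ is an object of the factorization category. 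Reversing this by $D_j := C_{B(j)}$ recovers the $\cC$-condition and provides the inverse bijection on object sets.

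For the morphism correspondence I would use the relabeling $\gamma_j' := \gamma_{B(j)}$, which carries $\gamma_i \in \cB_{r_{B\inv(i)}}^+$ to $\gamma_j' \in \cB_{r_j}^+$. The operad composition on morphisms in $\br$---``analogous except for the obvious permutation of the indices'' as noted when the operad structure is defined---identifies the braid written $\gamma_1\oplus\cdots\oplus\gamma_n$ in the factorization-category morphism equation with the concatenation $\gamma_{B(1)}\sqcup\cdots\sqcup\gamma_{B(n)}$ in $\cB_m^+$. Viewed as a block-diagonal braid, this places $\gamma_j' = \gamma_{B(j)}$ in the natural $j$-th block of size $r_j$, which is precisely $\gamma_1'\oplus\cdots\oplus\gamma_n'$ in the notation of $\cC$. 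Thus the two morphism equations coincide, and the relabeling is a bijection on hom-sets respecting componentwise composition and identities, so $F$ is an isomorphism of categories.

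The main obstacle is the careful bookkeeping of block sizes: in the factorization category the $i$-th block has size $r_{B\inv(i)}$ whereas in $\cC$ the $j$-th block has size $r_j$, and one must check that these two layouts describe the same underlying braid in $\cB_m^+$ once the index permutation by $B$ is accounted for. Once this bookkeeping is unpacked and the conjugation identity above is verified, the isomorphism is forced by the operadic interpretation of $\oplus$ in the factorization category.
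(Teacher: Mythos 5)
Your proof is correct and follows essentially the same route as the paper's: both hinge on the identity $B\inv(r_{B\inv(1)},\ldots,r_{B\inv(n)})=\tilde B\inv$ together with the commutation relation $\tilde B\inv(C_1\oplus\cdots\oplus C_n)=(C_{B(1)}\oplus\cdots\oplus C_{B(n)})\tilde B\inv$, which converts Equation~\eqref{eq} into the defining condition of $\cC$ and shows the $C_i$ are determined by $\alpha$. The only difference is one of detail: the paper dismisses the morphism comparison as ``easily seen,'' whereas you spell out the relabeling $\gamma_j'=\gamma_{B(j)}$ and the block-size bookkeeping coming from the index permutation in the operad composition, which is exactly the point that needs checking.
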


\begin{proof} Here 
$\tilde{B}\inv(C_1\oplus\cdots\oplus C_n)=(C_{B(1)}\oplus\cdots\oplus C_{B(n)})\tilde{B}\inv$, 
so Equation \eqref{eq} can be rewritten as $$p(\alpha)A\tilde{B}=C_{B(1)}\oplus\cdots\oplus C_{B(n)}.$$ 
This equation determines the $C_i$'s uniquely given $\alpha$ with $p(\alpha)A\tilde{B}$ in \mbox{$\Sigma_{r_1}\times\cdots\times\Sigma_{r_n}$}.
The two categories therefore have isomorphic objects, and the morphism sets are easily seen to be isomorphic as well. 
\end{proof}

\begin{proof}[Proof of Theorem~\ref{theorem main}]
Together Lemmas \ref{lemma initial object} and \ref{lemma factorization category iso} show that each factorization category has an initial object in each of its connected components. Thus the operad $\br$ satisfies the factorization condition \cite[Definition~6.8]{FSV13} and the result follows from Theorem~6.10 \cite{FSV13}.\end{proof}

\bibliographystyle{alpha}
%\bibliography{Bibliography}

\end{document}